 \lstdefinelanguage{Magma}%
  {%
   otherkeywords={:=,+:=,-:=,*:=},%
   procnamekeys={function,func,intrinsic,procedure,proc,return},%
   morekeywords={true,false},%
   morekeywords=[2]{adj,and,cat,cmpeq,cmpne,diff,div,eq,ge,gt,in,is,join,le,lt,%
          meet,mod,ne,notadj,notin,notsubset,or,sdiff,subset,xor},%
   morekeywords=[3]{assigned,break,by,case,catch,continue,declare,default,%
          delete,do,elif,else,end,eval,exists,exit,for,forall,fprintf,if,local,%
          not,print,printf,quit,random,read,readi,repeat,restore,save,select,%
          then,time,to,try,until,vprint,vprintf,vtime,when,where,while},%
   morekeywords=[4]{clear,forward,freeze,iload,import,load},%
   morekeywords=[5]{assert,assert2,assert3,error,require,requirege,requirerange},%
   morekeywords=[6]{car,comp,cop,elt,ext,frac,hom,ideal,iso,lideal,loc,map,%
          ncl,pmap,quo,rec,recformat,rep,rideal,sub},%
      sensitive,%
      morecomment=[l]//
  }[keywords,procnames,comments,strings]%
\newtheorem{theorem}{Theorem}[section]
\newtheorem*{thm}{Theorem}
\newtheorem{lemma}[theorem]{Lemma}
\newtheorem{proposition}[theorem]{Proposition}
\newtheorem{corollary}[theorem]{Corollary}
\theoremstyle{definition}     
\newtheorem{definition}[theorem]{Definition}
\theoremstyle{remark}
\newtheorem{remark}[theorem]{Remark}
\numberwithin{equation}{section}
\newcommand\Bs{{\mathrm{Bs}}}
\newcommand\Cl{{\mathrm{Cl}}}
\newcommand\Pic{{\mathrm{Pic}}}
\newcommand\N{{\mathrm{N}}}
\newcommand\Eff{{\mathrm{Eff}}}
\newcommand\Nef{{\mathrm{Nef}}}
\newcommand\Mov{{\mathrm{Mov}}}
\newcommand\Cox{{\mathrm{Cox}}}
\newcommand\SAmp{{\mathrm{SAmp}}}
\newcommand\Aut{{\mathrm{Aut}}}
\newcommand{\Stab}{\mathrm{Stab}}
\newcommand{\ord}{\text{ord}}
\newcommand{\Jac}{\mathrm{Jac}}
\newcommand{\Fix}{\mathrm{Fix}}
\newcommand{\Cone}{\mathbb R_{\geq 0}}
\title[Divisors on surfaces isogenous to a product of mixed type]{Divisors on surfaces isogenous to \\ a product of mixed type with $p_g=0$}
\author[D. Frapporti]{Davide Frapporti}
\address{University of Bayreuth, Lehrstuhl Mathematik VIII; 
Universit\"atsstra\ss e 30, D-95447 Bayreuth, Germany}
\email{Davide.Frapporti@uni-bayreuth.de}
\author[K.-S. Lee]{Kyoung-Seog Lee}
\address{Center for Geometry and Physics, Institute for Basic Science (IBS), Pohang 37673, Republic of Korea}
\address{{\it Current Address}: Institute of the Mathematical Sciences of the Americas, University of Miami, 1365 Memorial Drive, Ungar 515, Coral Gables, FL 33146, USA}
\email{kyoungseog02@gmail.com}
\subjclass[2010] {14J29,  14J50, 14E30, 14H37, 14L30, 14Q10}
\keywords{Mori dream space, surface of general type with $p_g=0$, effective cone, nef cone, semiample cone, Cox ring, surface isogenous to a product of mixed type}
\thanks{The first author is member of G.N.S.A.G.A. of I.N.d.A.M. and acknowledges support of the ERC-advanced Grant 340258-TADMICAMT. The second author was supported by IBS-R003-Y1.}
\date{\today}
\begin{document}
\begin{abstract} 
In this paper, we study effective, nef and semiample cones of surfaces isogenous to a product of mixed type with $p_g=0$. In particular, we prove that all reducible fake quadrics are Mori dream surfaces.
\end{abstract}
\maketitle

\section*{Introduction}
Effective, nef, semiample cones and Cox rings of algebraic varieties are fundamental invariants in birational geometry.  In \cite{HK00}
Hu and Keel proved that varieties having finitely generated Cox ring are ideal objects in birational geometry. They called those varieties \textit{Mori dream spaces}. After their work there have been intensive investigations about Cox rings and Mori dream spaces, see \cite{ADHL15} and references therein for more details. Most known examples of Mori dream spaces have non-positive Kodaira dimensions. However, the theory of Cox rings can be applied to wider classes of varieties, but Cox rings of varieties having positive Kodaira dimensions have been far less studied. Especially, we do not have many examples of Mori dream spaces of general type except some trivial cases. For example, a normal projective $\mathbb{Q}$-factorial variety with $h^1(\mathcal O)=0$ and Picard number one is a Mori dream space. In dimension 2, it is natural  to look for examples among minimal smooth projective surfaces of general type with $p_g=0$, since they have automatically $q=0$, and relatively small Picard numbers. 
A minimal smooth projective surface of general type with $p_g=0$ and Picard number 1 is called a \textit{fake projective plane}. It turns out that there are exactly 100 fake projective planes (cf. \cite{CS10, PY07, PY10}) and all of them provide examples of Mori dream surfaces of general type. Keum and the second author proved that there are more examples of Mori dream surfaces of general type with $p_g=0$ whose Picard numbers are greater than 1 (cf. \cite{KL19}). However, we do not know whether all minimal smooth projective surfaces with $p_g=0$ are Mori dream surfaces or not.

 In this view point, fake quadrics are interesting objects since they have Picard number two (hence become the next test case),  but their geometry is rather mysterious so far. 
All known examples of fake quadrics can be divided into two classes: reducible or irreducible fake quadrics. Reducible fake quadrics are \textit{surfaces isogenous to a product}:
a surface  $S$ is isogenous to a product (of curves)
if there exist compact Riemann surfaces $C_1, C_2$ of genus $g(C_i)\geq 2$  and 
a finite group  $G \leq \Aut(C_1 \times  C_2) $  acting freely on the product $C_1 \times C_2$ such that
$S=(C_1 \times C_2)/G$ (see \cite{Cat00}).
In the recent years there has been intensive works on those surfaces 
birational to the quotient of the product of two curves by the action of  a finite group. We refer to \cite{CF18, PignaQE} and the references therein for a recent and detailed account on it.

Surfaces isogenous to a product  divide naturally into two types:
\textit{unmixed} if $G$ acts diagonally on the product, \textit{mixed} if there are elements of $G$ which exchange the two factors.
Every surface isogenous to a product of unmixed type with $p_g=0$ carries two natural fibrations onto $\mathbb P^1$,
and one can use these fibrations to show that these surfaces are Mori dream surfaces, see \cite{KL19}.

In this paper, we study effective, nef and semiample cones and finite generation of Cox rings of surfaces isogenous to a product of mixed type with $p_g=0.$ These surfaces have been completely classified in \cite{BCG08,Frap13} and form 5 irreducible connected components in the moduli space of minimal smooth complex projective surfaces with $\chi=1$ and $K^2=8$.
We use their explicit group theoretic description and  the geometric structure arising from the construction to obtain the following result.

\begin{thm}
Let $S$ be a surface isogenous to a product of mixed type with $p_g=0.$ Then $S$ is a Mori dream surface and $\Eff(S)=\Nef(S)=\SAmp(S) $. 
\end{thm}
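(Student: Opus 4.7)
The plan is to reduce to the unmixed case via an \'etale double cover and to exploit the resulting involution action. I would let $G^0 \leq G$ be the unique index-two subgroup consisting of elements preserving the factors of $C_1 \times C_2$; since mixed type forces $C_1 \cong C_2 =: C$, this subgroup is well defined. Setting $Y := (C \times C)/G^0$ one obtains a surface isogenous to a product of unmixed type, together with an \'etale double cover $\pi \colon Y \to S$ whose deck transformation $\sigma \in \Aut(Y)$ is induced by any element of $G \setminus G^0$. The two projections $C \times C \to C$ descend to fibrations $f_i \colon Y \to B_i$, and $\sigma$ exchanges $f_1$ with $f_2$. The assumption $q(S) = 0$ forces the $\sigma$-invariant part of $H^1(Y, \mathcal{O}_Y) \cong H^1(B_1, \mathcal{O}) \oplus H^1(B_2, \mathcal{O})$ to vanish, so $B_1 \cong B_2 \cong \mathbb{P}^1$.

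Next I would collect the relevant invariants. From $\chi(S) = 1$, $K_S^2 = 8$, $p_g(S) = q(S) = 0$, Noether's formula gives $c_2(S) = 4$, hence $b_2(S) = 2$ and $h^{1,1}(S) = 2$, so $\rho(S) \in \{1,2\}$. The fiber classes $F_i := f_i^*(\mathrm{pt}) \in \Pic(Y)$ satisfy $F_i^2 = 0$, $F_1 \cdot F_2 > 0$ and $\sigma(F_1) = F_2$; thus $F_1 + F_2$ is $\sigma$-invariant, and under the natural isomorphism $\pi^* \colon \Pic(S) \otimes \mathbb{Q} \xrightarrow{\sim} (\Pic(Y) \otimes \mathbb{Q})^\sigma$ it descends to a class $D \in \Pic(S) \otimes \mathbb{Q}$ represented by the irreducible effective curve $\pi_*(F_1) = \pi_*(F_2)$ (irreducible because $F_1$ and $\sigma(F_1) = F_2$ lie in different fibrations). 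Since $K_Y = \pi^* K_S$ is $\sigma$-invariant, once $(\Pic(Y) \otimes \mathbb{Q})^\sigma$ is shown to be one-dimensional, $K_S$ will automatically be proportional to $D$ in $\Pic(S) \otimes \mathbb{Q}$.

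The crux is to prove $\rho(S) = 1$. For each of the five families of \cite{BCG08, Frap13} I would use the explicit group-theoretic description of $G$ acting on $C \times C$ to compute $(\Pic(Y) \otimes \mathbb{Q})^\sigma$. Additional algebraic classes on $Y$ beyond the span of $F_1, F_2$ would come from $G^0$-invariant divisors on $C \times C$ that are not fibers of the two projections; such divisors are controlled by correspondences in $\mathrm{End}(\Jac(C))$ and by the action of $\Aut(C)$, and one verifies case by case that they produce no new $\sigma$-invariant element in $\Pic(Y) \otimes \mathbb{Q}$ beyond multiples of $F_1 + F_2$.

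Once $\rho(S) = 1$ is established, the theorem follows immediately. With $K_S^2 > 0$, every irreducible curve $\Gamma \subset S$ satisfies $\Gamma \equiv \lambda K_S$ with $\lambda > 0$ (a positive intersection with any ample class forces $\lambda > 0$), hence $K_S \cdot \Gamma > 0$ and $K_S$ is ample by Nakai--Moishezon. Therefore $\Eff(S) = \Nef(S) = \SAmp(S) = \Cone \cdot K_S$, and the Cox ring of $S$ is finitely generated as, up to the torsion of $\Pic(S)$, the section ring of the ample generator $K_S$; thus $S$ is a Mori dream surface. The principal obstacle is the previous paragraph: a detailed case-by-case verification, for each of the five families, that no further $\sigma$-invariant algebraic class appears on $Y$. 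Should some family nevertheless have $\rho(S) = 2$, the argument must instead be adapted to exhibit two extremal effective classes of non-negative self-intersection that are simultaneously nef and semiample.
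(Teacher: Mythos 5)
Your plan hinges on proving $\rho(S)=1$, and that step cannot succeed: every surface isogenous to a product of mixed type with $p_g=0$ is a fake quadric ($\chi=1$, $K^2=8$, $e=4$, so $b_2=2$), and since $p_g=0$ the Lefschetz $(1,1)$ theorem gives $\rho(S)=b_2(S)=2$ in all five families. Your own setup already shows this indirectly: $\pi^*\colon \Pic(S)\otimes\mathbb{Q}\to(\Pic(Y)\otimes\mathbb{Q})^{\sigma}$ is injective, so the $\sigma$-invariant part is at least $2$-dimensional and cannot reduce to multiples of $F_1+F_2$. Consequently the entire main branch of your argument (ampleness of $K_S$ via $\rho=1$ and Nakai--Moishezon, effective cone a single ray) is vacuous, and everything falls into your final fallback sentence --- ``exhibit two extremal effective classes of non-negative self-intersection that are simultaneously nef and semiample'' --- which is stated but not carried out. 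That fallback is precisely the theorem to be proved, and it is where all the work lies.

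The paper's proof does exactly this missing part. It constructs explicit effective divisors on $S$ as images of $G$-orbits of graphs $\Delta_f=\{(x,fx)\}$ for $f$ in a subgroup $H$ with $G^0<H<\Aut(C)$ (``orbit divisors''), computes their intersection numbers by counting fixed points of automorphisms via the Riemann Existence Theorem, and then applies a criterion (Proposition \ref{DivFQ}): four irreducible effective divisors with $D_i^2=0$, two disjoint numerically equivalent pairs, and positive cross intersections force $\Eff(S)=\Nef(S)=\SAmp(S)=\Cone\langle D_1,D_2\rangle$; semiampleness comes from $\Pic^0(S)=0$ together with disjointness of numerically equivalent effective divisors (Lemma \ref{NE-LE}). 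A further nontrivial ingredient your proposal does not anticipate: for families 2--5 the curve $C$ is a triangle curve (covering of $\mathbb{P}^1$ branched over $3$ points), and $H=G^0$ yields only one ray in $\N^1(S)$; the paper must enlarge the automorphism group to $H=G(768,1085341)$, with $G^0\triangleleft_6 H$, by lifting symmetries of the branch configuration, and then verify the needed intersection data by an explicit (computer-assisted) computation. Without supplying these constructions and verifications, your proposal does not prove the statement.
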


Therefore we prove that all reducible fake quadrics are Mori dream surfaces. We raise the question whether every fake quadric is a Mori dream surface or not.

Let us briefly explain our strategy to study geometry and divisors of surfaces isogenous to a product of mixed type. Let $S=(C \times C)/G$ be a surface isogenous to a product of mixed type. Studying divisors on $S$ is equivalent to studying $G$-invariant divisors on $C \times C.$ Therefore, we want to construct several $G$-invariant divisors on $C \times C$ in order to get divisors lying on extremal rays of effective, nef and semiample cones of $S.$ To construct $G$-invariant divisors on $C \times C$ in a systematic way, we will use the following approach.
 Let $H$ be a subgroup of $\Aut(C).$ For each $h \in H$, let us consider the graph of the $h$-action $\Gamma_h = \{ (x,hx) | x \in C \}$. If $G^0<H$ (see Definition \ref{def_iso}), then the group $G$ acts on the set  $\{ \Gamma_h | h \in H \}$, and from  the $G$-orbits of this set of graphs, we  obtain several $G$-invariant divisors on $C \times C$, which descend to effective divisors on $S$.  We call these divisors \textit{orbit divisors} induced by $H$.
 Because we can compute the intersection numbers of these $G$-invariant divisors on $C \times C,$ we can compute intersection numbers of the orbit divisors and then determine  the location of the orbit divisors in $\N^1(S).$ By analysing the orbit divisors and their intersections case by case, we obtain the above result. 
 When the curve $C$ is a covering of $\mathbb P^1$ branched over 5 points (see Theorem \ref{thm:classification}), taking $H=G^0$ is enough to construct extremal rays of $\Eff(S), \Nef(S)$ and $\SAmp(S)$.
In the cases where the curve $C$ is a covering of $\mathbb P^1$ branched over 3 points, it is not enough to consider the automorphisms in $G^0$, because we get only one vector in $\N^1(S)$.
Our idea is then to  lift the symmetries of the configuration of 3 points in $\mathbb P^1$ 
and show  that $\Aut(C)$ contains a subgroup $H$  bigger than $G^0$.

The paper is organized as follows: in Section 1 we recall basic definitions and facts on  effective, nef, semiample cones and Mori dream spaces. In Section 2 we recall the definition of surfaces isogenous to a product and the above mentioned classification. Section 3 is devoted to explaining the construction of orbit divisors and to the proof of the main theorem.

\noindent {\bf Notation.} We work over $\mathbb{C}$
and use the standard notation in surface theory:
for a smooth projective  surface $S$ we denote by $p_g(S):=h^2(S, \mathcal O_S)$ its
geometric genus, by $q(S):= h^1(S, \mathcal O_S)$ its irregularity, 
by $\chi(\mathcal{O}_S)=1-q(S) +p_g(S)$ its holomorphic Euler-Poincar\'e characteristic, 
and by $K^2_S$ the self-intersection of its  canonical divisor.
For a smooth projective curve (Riemann surface)  $C$ we denote by $g(C)$  its genus.

We write $D_1 \sim_{num} D_2$ to denote that the divisors $D_1$ and $D_2$ are numerically equivalent. 

We denote by $\mathbb  Z_n$ the cyclic group of order $n$, 
by $D_n$ the 	dihedral group of order $2n$,
 and by	 $D_{p,q,r}$ the group $\langle x,y\mid x^p= y^q=1, xyx^{-1}=y^r\rangle$.
When $G$ is an abelian group, then $G_{\mathbb{R}}$(resp. $G_{\mathbb{Q}}$) denotes $G \otimes_{\mathbb{Z}} {\mathbb{R}}$(resp. $G \otimes_{\mathbb{Z}} {\mathbb{Q}}$). 

The rest of the notation is standard in algebraic geometry.

\bigskip

{\bf Acknowledgements.} 
The authors thank Stephen Coughlan for inspiring conversations and a careful reading of the paper, 
and  Ingrid Bauer and Fabrizio Catanese for helpful discussions and suggestions.
The second author thanks JongHae Keum for helpful discussions and suggestions.

\section{Mori Dream Surfaces}

In this section we collect several definitions and facts about effective, nef and semiample cones of algebraic surfaces.

\begin{definition} Let $X$ be a normal projective variety and $D$ be a Weil divisor on $X$.
We denote by $ \Bs|D|$ the base locus of $|D|$,  and by $\mathbb B |D|:=\bigcap_{n=1}^\infty\Bs|nD|$ the
 stable base locus of $|D|$.
\begin{enumerate}
\item The divisor $D$  is \textit{semiample} if $\Bs |nD|= \emptyset$ for some  positive $n\in \mathbb Z$.
The \textit{semiample cone} $\SAmp(X) \subset \N^1(X)(=\Cl(X)_{\mathbb R}/\sim_{num})$ is the convex cone generated by semiample divisors. 

\item The divisor $D$ is \textit{movable} if $\mathbb B |D|$  has codimension at  least 2.
The\textit{ movable cone} $\Mov(X)\subset \N^1(X)$ is the convex cone generated by movable divisors. 

\item The \textit{nef cone}  $\Nef(X)\subset \N^1(X)$ is the convex cone generated by nef divisors.

\item The e\textit{ffective cone}  $\Eff(X) \subset \N^1(X) $ is the convex cone generated  by effective divisors.

\end{enumerate}

\end{definition}

\begin{proposition}[cf. {\cite[Proposition 1.2]{AL11}}]
Let $S$ be a smooth projective surface. 
Then we have the following inclusions:
\[ \SAmp(S) \subset \Mov(S) \subset \Nef(S) \subset \overline{\Eff(S)}  \,.\]
\end{proposition}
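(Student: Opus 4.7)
The plan is to verify the three inclusions in turn, by showing that every generator of each cone lies in the next one. The first two inclusions are essentially tautological consequences of the definitions, while the main obstacle is the third inclusion $\Nef(S)\subset \overline{\Eff(S)}$, which requires an ample-perturbation argument together with the surface-specific fact that nef classes have non-negative self-intersection.

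For $\SAmp(S)\subset \Mov(S)$, I would take a semiample divisor $D$. By definition $\Bs|nD|=\emptyset$ for some positive integer $n$, hence the stable base locus $\mathbb B|D|\subset \Bs|nD|$ is empty, so it has codimension at least $2$ vacuously. Thus every semiample generator is movable, and passing to the generated cone gives the inclusion.

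For $\Mov(S)\subset \Nef(S)$, I would let $D$ be a movable divisor and $C\subset S$ an arbitrary irreducible curve. Since $\mathbb B|D|$ has codimension $\geq 2$, the curve $C$ is not contained in $\mathbb B|D|$, so there exist $n\geq 1$ and $D'\in |nD|$ whose support does not contain $C$. Then $n(D\cdot C)=D'\cdot C\geq 0$, so $D\cdot C\geq 0$, proving that $D$ is nef.

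For the last and hardest inclusion, I would fix an ample class $A\in \N^1(S)$ and show that for any nef $D$ and any $\epsilon>0$, the perturbation $D+\epsilon A$ is ample. On a surface Nakai--Moishezon reduces this to checking $(D+\epsilon A)^2>0$ and $(D+\epsilon A)\cdot C>0$ for every irreducible curve $C$; both follow from the nefness of $D$, the ampleness of $A$, and the surface-specific inequality $D^2\geq 0$ for $D$ nef (the step I expect to be the technical crux of the argument). Since an ample class is represented by an effective $\mathbb Q$-divisor, $D+\epsilon A\in \Eff(S)$ for every $\epsilon>0$, and letting $\epsilon\to 0$ yields $D\in \overline{\Eff(S)}$. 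The assumption $q(S)=0$ enters only at the level of translating between linear and numerical equivalence, which differ at most by torsion when $q(S)=0$, making each cone-theoretic statement above well-defined inside $\N^1(S)=\Cl(S)_\mathbb R/\sim_{num}$.
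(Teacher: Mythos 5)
Your argument is correct, but note that the paper itself gives no proof of this statement: it is quoted verbatim from Artebani--Laface \cite[Proposition 1.2]{AL11}, so there is no internal argument to compare against, and what you have written is essentially the standard textbook proof that the cited source relies on. A few small points worth tightening. In the second inclusion, when you pick $n$ with $C\not\subset\Bs|nD|$ you should observe that such an $n$ exists because otherwise $C\subset\bigcap_n\Bs|nD|=\mathbb B|D|$ (with the convention $\Bs|nD|=S$ when $|nD|=\emptyset$), and that $C\not\subset\Bs|nD|$ indeed yields a member $D'\in|nD|$ not containing $C$, whence $D'.C\geq 0$; as stated this is fine but the quantifiers deserve a sentence. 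In the third inclusion, take $\epsilon$ rational so that Nakai--Moishezon applies to the $\mathbb Q$-divisor $D+\epsilon A$ (or simply invoke Kleiman's criterion, ``nef plus ample is ample'', equivalently $\Nef(S)=\overline{\mathrm{Amp}(S)}$, which shortcuts the whole step since ample classes are effective); the fact you flag as the crux, $D^2\geq 0$ for $D$ nef, is Kleiman's theorem and should be cited or proved rather than merely asserted, since your Nakai computation genuinely depends on it. Finally, your closing remark about $q(S)=0$ is harmless but slightly off target: the three inclusions hold in $\N^1(S)$ for any smooth projective surface; the hypothesis $q(S)=0$ matters for the surrounding framework of the paper (finite generation of $\Pic(S)$, identification of linear and numerical equivalence up to torsion, and the Mori dream space criterion of Corollary \ref{corAHL10}), not for the validity of the chain of inclusions itself.
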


Let us recall the definition of Mori dream space.
\begin{definition}[{\cite[Definition 1.10]{HK00}}]
A variety $X$ is a  \textit{Mori dream space} if 
\begin{enumerate}
\item  $X$ is $\mathbb{Q}$-factorial variety 
with finitely generated Picard group  $\Pic(X)$, i.e. $h^1(X,\mathcal{O}_X)=0$,
\item the nef cone of $X$ is generated by finitely many semiample divisor classes, and 
\item there are finitely many birational maps $\phi_i : X \dashrightarrow X_i, 1 \leq i \leq m$ which are isomorphisms in codimension 1, $X_i$ are varieties satisfying (1), (2) and if $D$ is a movable divisor then there is an index $1 \leq i \leq m$ and a semiample divisor $D_i$ on $X_i$ such that $D=\phi^*_iD_i.$ 
\end{enumerate}
\end{definition}

Let us recall the definition of Cox ring.

\begin{definition}
Let $X$ be a normal projective $\mathbb{Q}$-factorial variety with finitely generated $\Cl(X)$. 
Let $\Gamma \subset \Cl(X)$ be a free Abelian group such that the inclusion map induces 
an isomorphism $\Gamma \otimes \mathbb{Q} \cong \Cl(X) \otimes \mathbb{Q}.$ 
Then a Cox ring of $X$ (associated to $\Gamma$) is a multi-graded ring defined as follows:
\[ \Cox(X) = \bigoplus_{D \in \Gamma}H^0(X,\mathcal{O}_X(D)). \]
\end{definition}

\begin{remark}
Although the above definition of a Cox ring depends on the choice of $\Gamma \subset \Cl(X)$, it is well-known (see \cite{HK00}) that its finite generation is independent of the choice. 
\end{remark}

\begin{theorem}[{\cite[Proposition 2.9]{HK00}}]
Let $X$ be a $\mathbb{Q}$-factorial variety with finitely generated Picard group  $\Pic(X)$.
Then $X$ is a Mori dream space if and only if $\Cox(X)$ is a finitely generated ring.
\end{theorem}

In dimension two there is a simpler criterion to decide whether a surface is a Mori dream space.

\begin{theorem}[{\cite[Theorem 2.5]{AHL10}}]\label{thmAHL10}
Let $S$ be a normal complete surface with finitely generated $\Cl(S)$. 
Then $S$ is a Mori dream space if and only if $\Eff(S)$ and $\Mov(S)$ are rational polyhedral cones and $\Mov(S)=\SAmp(S)$.
\end{theorem}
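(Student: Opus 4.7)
The plan is to proceed case by case through the classification of surfaces isogenous to a product of mixed type with $p_g=0$ from \cite{BCG08,Frap13} (five families), and for each family to produce finitely many effective divisors on $S$ whose classes generate $\Eff(S)$ as a rational polyhedral cone and are all semiample. Once this is done, the chain $\SAmp(S)\subset\Mov(S)\subset\Nef(S)\subset\overline{\Eff(S)}$ from Proposition 1.2 forces equality throughout, so $\Mov(S)=\SAmp(S)$ is rational polyhedral. Since $p_g(S)=0$ and $\chi(S)=1$ give $q(S)=0$, the class group $\Cl(S)$ is finitely generated, and Theorem \ref{thmAHL10} then concludes that $S$ is a Mori dream surface.

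To generate effective divisors systematically, I would fix a subgroup $H\leq \Aut(C)$ containing $G^0$ and, for each $h\in H$, consider the graph $\Delta_h\subset C\times C$. The assumption $G^0\leq H$ guarantees that $G$ permutes $\{\Delta_h\}_{h\in H}$, so each $G$-orbit descends to an effective divisor on $S=(C\times C)/G$ — an \emph{orbit divisor}. Because the étale cover $C\times C\to S$ identifies $\N^1(S)$ with the $G$-invariant part of $\N^1(C\times C)$, and because $\Delta_h\cdot\Delta_{h'}$ on $C\times C$ is computable from the genus of $C$ and the fixed-point data of $h^{-1}h'$, one can compute all mutual intersection numbers of orbit divisors on $S$ and locate their classes in $\N^1(S)$ explicitly.

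The argument then splits according to the ramification of $C\to C/\Aut(C)\to \mathbb{P}^1$. When the branch locus has $5$ points, taking $H=G^0$ already yields enough orbit divisors to span a rational polyhedral cone whose boundary rays can be verified, by direct numerical check, to generate $\Eff(S)$. When the branch locus has $3$ points, $H=G^0$ spans only a one-dimensional subspace of $\N^1(S)$, so one must first enlarge $H$ by lifting the symmetries of the $3$-point configuration of $\mathbb{P}^1$ to automorphisms of $C$ normalising $G^0$, and only then list orbit divisors. In every case, to verify semiampleness of each extremal generator, I would realise it (or a positive multiple) as the pullback of an ample class under a morphism $S\to Y$ arising from an appropriate intermediate quotient — typically a fibration coming from a $G$-equivariant projection $C\times C\to C/H'$.

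The main obstacle lies in the $3$-branch-point families: one must genuinely produce extra automorphisms of $C$ beyond $G^0$ (a non-trivial computation with the orbifold/Hurwitz presentation of $\Aut(C)$ and the action of $G$ on $C\times C$), and then show that the enlarged collection of orbit divisors exhausts every extremal ray of $\Eff(S)$ rather than spanning only a proper subcone — which requires matching the number of independent classes obtained against the rank of $\N^1(S)$ predicted by the group-theoretic data, and simultaneously exhibiting the semiample morphisms witnessing each extremal generator.
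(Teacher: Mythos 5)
Your proposal does not address the statement you were asked to prove. The statement is Theorem \ref{thmAHL10}, the general criterion of Artebani--Hausen--Laface: for a normal complete surface $S$ with finitely generated $\Cl(S)$, being a Mori dream space is equivalent to $\Eff(S)$ and $\Mov(S)$ being rational polyhedral and $\Mov(S)=\SAmp(S)$. This is a structural equivalence valid for \emph{all} such surfaces. What you have written instead is a strategy for the paper's main theorem --- that the five families of surfaces isogenous to a product of mixed type with $p_g=0$ are Mori dream surfaces --- via orbit divisors on $C\times C$. Worse, your argument explicitly \emph{invokes} Theorem \ref{thmAHL10} as its final step (``Theorem \ref{thmAHL10} then concludes that $S$ is a Mori dream surface''), so as a purported proof of that theorem it is circular.

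A proof of the actual statement has nothing to do with group actions on products of curves. One direction should verify the Hu--Keel conditions: if $\Eff(S)$ and $\Mov(S)$ are rational polyhedral and $\Mov(S)=\SAmp(S)$, then condition (3) of the Mori dream space definition is satisfied with the single model $S$ itself, using the fact that in dimension two a birational map which is an isomorphism in codimension one between normal surfaces is an isomorphism, so there are no nontrivial small modifications; one must also handle the $\mathbb{Q}$-factoriality/finite generation of $\Pic$ from the hypothesis on $\Cl(S)$ (in AHL10 this is phrased via finite generation of the Cox ring). The converse direction extracts polyhedrality of the cones and $\Mov=\SAmp$ from the Mori dream space axioms, again using that $S$ admits no small modifications other than itself. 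In this paper the theorem is simply quoted from \cite{AHL10}; if you want to supply a proof you should reconstruct that argument, not the case-by-case analysis of the mixed isogenous-to-a-product families.
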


\begin{corollary}[{\cite[Corollary 2.6]{AHL10}}]\label{corAHL10}
Let $S$ be a  $\mathbb{Q}$-factorial projective surface with $q(S)=0$. Then $S$ is a Mori dream space if and only if $\Eff(S)$ is a rational polyhedral cone and $\Nef(S)=\SAmp(S).$
\end{corollary}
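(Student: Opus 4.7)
The plan is to verify the hypotheses of Corollary~\ref{corAHL10} case by case, exploiting the classification of mixed-type surfaces isogenous to a product with $p_g = 0$ into five families. First observe that $S$ is smooth projective (hence $\mathbb{Q}$-factorial), and the invariants $\chi = 1$, $K_S^2 = 8$, $p_g = 0$ together with Noether's formula give $q(S) = 0$, $c_2(S) = 4$, and $h^{1,1}(S) = 2$. Hence $\rho(S) \leq 2$, so every convex cone in $\N^1(S)_{\mathbb{R}}$ generated by finitely many classes is automatically rational polyhedral. Consequently it suffices, for each of the five families, to produce at most two effective divisor classes that (a) span $\N^1(S)_{\mathbb{R}}$, (b) generate $\Eff(S)$, and (c) are semiample; this yields both $\Eff(S) = \Nef(S) = \SAmp(S)$ and, via Corollary~\ref{corAHL10}, the Mori dream conclusion.

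The candidate extremal generators come from the orbit divisor construction outlined in the introduction. Choose a subgroup $H \leq \Aut(C)$ containing $G^0$ and compatible with the swap action of $G \setminus G^0$ on the two factors; then $\{\Delta_h : h \in H\}$ is $G$-stable inside $C \times C$. Each graph $\Delta_h$ is a smooth curve with $\Delta_h^2 = 2 - 2g(C)$ (by adjunction, since the two projections restrict to degree-one maps) and $\Delta_h \cdot \Delta_{h'} = |\Fix(h^{-1}h')|$ for $h \neq h'$, so the sum of a $G$-orbit of graphs is a $G$-invariant effective divisor on $C \times C$ descending to an orbit divisor $D_h$ on $S$ whose intersection numbers are explicitly determined by the group-theoretic data.

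For each family I would then carry out the following program: (i) select $H$ --- taking $H = G^0$ when $C/G^0 \cong \mathbb{P}^1$ is branched at five points, and lifting the permutation symmetries of the three branch points to obtain a proper enlargement of $G^0$ when $C \to \mathbb{P}^1$ is branched at three points; (ii) choose one or two orbit divisors whose classes span $\N^1(S)_{\mathbb{R}}$ (checked via the intersection pairing); (iii) realise each such class as the pullback of an ample class under an explicit morphism --- for instance a $G$-invariant fibration on $C \times C$ descending to $S$, or a contraction on $S$ collapsing the other orbit divisor --- thereby proving semiamplicity; (iv) use the intersection matrix together with the Hodge index theorem to exclude any effective class outside the cone spanned by the chosen orbit divisors, concluding that $\Eff(S)$ equals that cone and coincides with $\Nef(S)$ and $\SAmp(S)$.

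The principal obstacle is the three-branch-point subcase of step (i). With $H = G^0$ only one orbit divisor class arises in $\N^1(S)_{\mathbb{R}}$, insufficient to span the Néron--Severi space, so one must genuinely enlarge $G^0$ inside $\Aut(C)$ by a case-specific analysis that identifies a lift of the permutations of the branch points --- essentially a monodromy calculation coming from the triangle group presentation of the covering $C \to \mathbb{P}^1$. A secondary technical difficulty is step (iii): because $K_S^2 > 0$ the extremal orbit divisors must have non-positive self-intersection and hence cannot be ample, so their semiamplicity has to be established through an explicit geometric morphism produced family by family rather than by general positivity.
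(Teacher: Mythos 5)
Your proposal does not prove the statement it was assigned. Corollary~\ref{corAHL10} is a general criterion valid for \emph{every} $\mathbb{Q}$-factorial projective surface $S$ with $q(S)=0$: it asserts the equivalence ``$S$ is a Mori dream space $\Longleftrightarrow$ $\Eff(S)$ is rational polyhedral and $\Nef(S)=\SAmp(S)$.'' What you have written is instead an outline of the proof of the paper's \emph{main theorem} (Theorem~\ref{mainThm}) for the five families of mixed-type surfaces isogenous to a product with $p_g=0$ --- and your outline explicitly \emph{invokes} Corollary~\ref{corAHL10} (``via Corollary~\ref{corAHL10}, the Mori dream conclusion'') as an ingredient. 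So as a proof of the corollary it is both off-target (it only treats a very special class of surfaces, whereas the statement concerns arbitrary ones, including rational surfaces, K3s with $q=0$, etc.) and circular (it assumes the very equivalence to be established). No amount of orbit-divisor computation on $(C\times C)/G$ can establish a statement quantified over all $\mathbb{Q}$-factorial surfaces with $q=0$.

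In the paper this corollary carries no proof at all: it is quoted verbatim from \cite[Corollary 2.6]{AHL10}, and the intended derivation is from Theorem~\ref{thmAHL10} (\cite[Theorem 2.5]{AHL10}) together with the chain of inclusions $\SAmp(S)\subset\Mov(S)\subset\Nef(S)\subset\overline{\Eff(S)}$ recorded in \cite[Proposition 1.2]{AL11}. The actual argument runs roughly as follows: $q(S)=0$ and $\mathbb{Q}$-factoriality give a finitely generated class group, so Theorem~\ref{thmAHL10} applies; if $\Eff(S)$ is rational polyhedral then so is its dual $\Nef(S)$, and the hypothesis $\Nef(S)=\SAmp(S)$ squeezes $\Mov(S)$ between them, yielding $\Mov(S)=\SAmp(S)$ rational polyhedral and hence the Mori dream property; conversely, for a Mori dream surface the nef cone is generated by finitely many semiample classes, so $\Nef(S)=\SAmp(S)$, and $\Eff(S)$ is rational polyhedral by Theorem~\ref{thmAHL10}. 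If you want to salvage your text, it belongs to the proof of Theorem~\ref{mainThm}, where it is in fact a fair summary of the paper's strategy; but for the present statement you need the cone-theoretic deduction just sketched, not a case-by-case analysis of isogenous-to-a-product surfaces.
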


\begin{remark}
Since for surfaces the nef cone is dual to the closure of the effective cone, if $\Eff(S)$ is a rational polyhedral cone then  $\Nef(S)$ is also a rational polyhedral cone. In this case, it is sufficient to prove that extremal rays of $\Nef(S)$ are semiample to prove that $S$ is Mori dream space. 
\end{remark}

\subsection{Cones of fake quadrics}

Here, we will give a brief review about fake projective planes and fake quadrics. 

\begin{definition}
Let $S$ be a smooth minimal  surface of general type. We call $S$ a \textit{fake quadric} if the Hodge diamond of $S$ is the same as the Hodge diamond of a smooth quadric, i.e. 
$p_g(S)=q(S)=0$ and $K_S^2=8$.
\end{definition}

\begin{remark}
Fake quadrics have the second biggest $K^2$ among minimal surfaces of general type with $p_g=0.$
The maximal value $ K^2=9$ is achieved by the 
fake projective planes, i.e.~. smooth minimal  surface of general type
having the same as the Hodge diamond of the projective plane. 
 From Yau's theorem one can see that the univeral covering space of a fake projective plane is two-dimensional complex ball. From this description and investigation of fundamental groups of fake projective planes, it turns out that there are exactly 100 fake projective planes (cf. \cite{CS10, PY07, PY10}) and all of them provide examples of Mori dream surfaces of general type since they have Picard number one.
\end{remark}

Fake quadrics are interesting objects since they have Picard number  two (hence become the first nontrivial test cases) but their geometry is rather mysterious so far. Especially, we do not know how to classify fake quadrics. In \cite{BCP11}, the authors asked whether the universal cover of a fake quadric is always the product of two upper half planes or not and discussed how it is related to the famous question of Hirzebruch asking whether there is a surface of general type which is homeomorphic to $\mathbb{P}^1 \times \mathbb{P}^1$ or $\mathrm{Bl}_{\mathrm{pt}}\mathbb{P}^2$. All known examples of fake quadrics are uniformized by the product of two upper half planes. Let $S$ be  such a fake quadric. We call $S$ a \textit{reducible} fake quadric if it admits a finite unramified covering which is a product of two smooth projective curves, and \textit{irreducible} otherwise.

Reducible fake quadrics are surfaces isogenous to a product and we will discuss their classification in the next section. 
There has been a recent progress in the classification of irreducible fake quadrics (cf. \cite{LSV19}).
 However, we do not know much about the geometry of fake quadrics except a few cases. From this perspective, it is a very interesting question to know whether all fake quadrics are Mori dream spaces or not.

\begin{lemma}\label{NE-LE}

Let $S$ be a smooth projective surface with $q(S)=0$ and $D_1, D_2$ be two numerically equivalent effective divisors on $S$, then
\begin{itemize}
\item[a)] there exists a positive $m \in \mathbb{Z}$ such that $mD_1$ is linearly equivalent to $mD_2$.
\item[b)] if $D_1$, $D_2$ are effective and $D_1 \cap D_2 = \emptyset$. Then $D_1$ and $D_2$ are semiample divisors.
\end{itemize}
\end{lemma}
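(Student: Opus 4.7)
The lemma has two parts and part (b) will be an immediate consequence of part (a), so I would organize the proof accordingly.

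For part (a), my plan is to use that a fake quadric satisfies $q(S)=0$, which forces $\Pic^0(S)=0$, so the natural surjection $\Pic(S)\twoheadrightarrow \mathrm{NS}(S)$ is an isomorphism. By the Néron--Severi theorem, $\mathrm{NS}(S)$ is a finitely generated abelian group, so its torsion subgroup $\mathrm{NS}(S)_{\mathrm{tors}}$ is finite. The quotient $\mathrm{NS}(S)/\mathrm{NS}(S)_{\mathrm{tors}}$ injects into $\N^1(S)$, i.e.\ the kernel of $\Pic(S)\to\N^1(S)$ consists precisely of torsion classes. Therefore $D_1\sim_{num} D_2$ means that $[D_1-D_2]\in \Pic(S)$ is a torsion class, and so there exists a positive integer $m$ (any multiple of $\#\mathrm{NS}(S)_{\mathrm{tors}}$ works) with $m(D_1-D_2)\equiv 0$, i.e.\ $mD_1\equiv mD_2$.

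For part (b), my plan is to invoke part (a) to choose a positive integer $m$ with $mD_1\equiv mD_2$. Then both $mD_1$ and $mD_2$ lie in the same complete linear system $|mD_1|$. Since $D_1\cap D_2=\emptyset$ (as subsets of $S$), the supports of $mD_1$ and $mD_2$ are disjoint as well, so the base locus $\Bs|mD_1|$, being contained in the intersection of any two members of the system, is empty. Hence $mD_1$ is base point free, which is exactly the definition of $D_1$ being semiample. By symmetry, $D_2$ is semiample as well.

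The argument is essentially formal once the structural input $q(S)=0$ is invoked. There is no real obstacle: the only step requiring care is ensuring that numerical equivalence coincides with linear equivalence modulo torsion on a surface with $q=0$, and this is a standard consequence of the finite generation of $\mathrm{NS}(S)$ together with the vanishing of $\Pic^0(S)$. The rest is a one-line observation about base loci of pencils with a disjoint pair of members.
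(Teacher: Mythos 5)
Your proposal is correct and follows the same route as the paper: part (a) is exactly the paper's observation that $q(S)=0$ gives $\Pic^0(S)=0$ (you merely spell out the standard torsion argument for the finitely generated N\'eron--Severi group), and part (b) is the paper's argument that two disjoint linearly equivalent members of $|mD_1|$ force an empty base locus, hence semiampleness.
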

\begin{proof}
a) This follows directly from $\Pic^0(S)=0$.\\
b) Since $mD_1$ is linearly equivalent to $mD_2$ for some positive $m\in \mathbb Z$ and  $D_1 \cap D_2 = \emptyset$, then 
the linear system $|mD_1|$ has no base point, whence  $D_1$ and $D_2$ are semiample.
\end{proof}

\begin{proposition}\label{DivFQ} 
Let $S$ be a smooth projective surface with $q(S)=0$ and  Picard number 2. 
If  $D_1,D_2,D_3,D_4$  are four distinct effective irreducible divisors on $S$ such that 
\[D_1^2=D_2^2=D_3^2=D_4^2=0 \qquad  D_1.D_4=D_2.D_3=0\quad  D_1.D_2=D_1.D_3=D_4.D_2=D_4.D_3>0\,,\]
 then  $\Eff(S)=\Nef(S)=\SAmp(S)=\Cone\langle D_1, D_2 \rangle$. \\
In particular, $S$  is a Mori dream surface and does not have a negative curve.
\end{proposition}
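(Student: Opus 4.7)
The plan is to exploit the fake quadric hypothesis to pin down $\rho(S)=2$, identify the two numerical equivalence classes $\{D_1,D_4\}$ and $\{D_2,D_3\}$, extract semiampleness from Lemma~\ref{NE-LE}, and then use the two-dimensional intersection geometry to show that no other effective classes can appear.

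First I would pin down $\rho(S)=2$. Since $p_g(S)=q(S)=0$ and $K_S^2=8$, Noether's formula gives topological Euler number $4$, hence $b_2(S)=h^{1,1}(S)=2$ and $\rho(S)\leq 2$; the hypotheses $D_1\cdot D_2>0$ and $D_1^2=0$ show that $[D_1]$ and $[D_2]$ are linearly independent in $\N^1(S)$, so $\rho(S)=2$ and they form a basis in which the intersection form is anti-diagonal with off-diagonal entry $a:=D_1\cdot D_2>0$. Writing $D_4=\alpha D_1+\beta D_2$ numerically, the hypotheses $D_1\cdot D_4=0$ and $D_4\cdot D_2=a$ give $\beta=0$ and $\alpha=1$, so $D_1\sim_{num}D_4$; the same computation yields $D_2\sim_{num}D_3$.

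Next I would extract semiampleness. Since $D_1$ and $D_4$ are distinct irreducible curves on the smooth projective surface $S$, any shared component would force $D_1=D_4$; hence they intersect in a finite scheme of length $D_1\cdot D_4=0$, and are therefore disjoint. Lemma~\ref{NE-LE}(b) then implies that $D_1$ and $D_4$ are semiample, and likewise $D_2$ and $D_3$ are semiample. In particular $\Cone\langle D_1,D_2\rangle\subseteq\SAmp(S)$.

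For the reverse inclusion on $\Eff(S)$, I would observe that any irreducible effective divisor $E$ satisfies $E\cdot D_1\geq 0$ and $E\cdot D_2\geq 0$: either $E$ differs from $D_1$ as an irreducible curve, in which case the intersection is non-negative, or $E=D_1$, giving $E\cdot D_1=0$; the same dichotomy holds with $D_2$. Writing $E=\alpha D_1+\beta D_2$ numerically and using the anti-diagonal intersection form, these two inequalities translate into $\alpha,\beta\geq 0$, so $E\in\Cone\langle D_1,D_2\rangle$. Combining with the inclusions $\SAmp(S)\subseteq\Nef(S)\subseteq\overline{\Eff(S)}$ from Proposition~1.2, all four cones collapse to $\Cone\langle D_1,D_2\rangle$; this is a rational polyhedral cone with $\Nef(S)=\SAmp(S)$, so Corollary~\ref{corAHL10} identifies $S$ as a Mori dream surface, and the description $E=\alpha D_1+\beta D_2$ with $\alpha,\beta\geq 0$ gives $E^2=2\alpha\beta a\geq 0$, excluding negative curves. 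The only mildly delicate point is the Hodge-theoretic input $h^{1,1}(S)=2$; once this is in hand, the entire argument reduces to a clean calculation in a rank-two intersection lattice, so I do not anticipate a serious obstacle.
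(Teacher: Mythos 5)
Your proposal is correct and follows essentially the same route as the paper: $\rho(S)=2$ with $D_1,D_2$ as a basis, the numerical identifications $D_1\sim_{num}D_4$ and $D_2\sim_{num}D_3$, semiampleness from disjointness via Lemma~\ref{NE-LE}, and the conclusion via Corollary~\ref{corAHL10}. The only (harmless) variation is at the end: you bound $\Eff(S)$ directly by testing irreducible curves against $D_1,D_2$ and then collapse the chain of cone inclusions, whereas the paper first shows $\Nef(S)\subset\Cone\langle D_1,D_2\rangle$ by pairing a nef class with the effective divisors $D_1,D_2$ and then recovers $\Eff(S)$ by duality.
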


\begin{proof}
Using the above intersection data and  since $\rho(S)=2$,
we can take $D_1, D_2$ as basis of $\Pic(S)_\mathbb R.$ Then we can see that $D_1\sim_{num}D_4$ and $D_2\sim_{num}D_3$ since they have the same intersection numbers with respect to this basis.
Moreover, since  $D_1$ does not meet $D_4$, they are semiample divisors, as well as $D_2$ and $D_3$,
whence $\Cone\langle D_1, D_2 \rangle \subset \SAmp(S) $.

Let $D=aD_1+b D_2$ be a nef divisor, then $0\leq D.D_1=aD_1.D_2$ and $0 \leq D.D_2=bD_1.D_2$ since $D_1, D_2$ are effective divisors. These inequalities imply that $\Nef(S) \subset \Cone\langle D_1, D_2 \rangle$, and we have the following inclusions.
\[\Cone\langle D_1, D_2 \rangle \subset \SAmp(S) \subset \Nef(S) \subset \Cone\langle D_1, D_2 \rangle\ \]
It follows that $\Nef(S)=\SAmp(S)=\Cone\langle D_1, D_2 \rangle$. 
By taking duals, we get  $\Eff(S) = \Cone\langle D_1, D_2 \rangle$ and we obtain the desired conclusion by Corollary \ref{corAHL10}.
\end{proof}

\section{Surfaces isogenous to a product}

An important class of fake quadrics are surfaces isogenous to a product with $p_g=0$.

\begin{definition}\label{def_iso}
A surface  $S$ is \textit{isogenous to a  product of curves}
if there exist compact Riemann surfaces $C_1, C_2$ of genus $g(C_i)\geq 2$  and 
a finite group  $G \leq \Aut(C_1 \times  C_2) $  acting freely on the product $C_1 \times C_2$ such that
$S=(C_1 \times C_2)/G$.
\end{definition}

In this situation every automorphism $f\in \Aut(C_1 \times  C_2)$ is either diagonal  $f(x,y)=(f_1(x), f_2(y))$, or (only if $C_1 \cong C_2$) of the form $f(x,y)=( f_2(y), f_1(x))$, for $ f_i\in \Aut(C_i), i=1,2$ (see \cite[Lemma 3.8]{Cat00}).
 Let $G^0:=G \cap (\mathrm{Aut}(C_1)\times \mathrm{Aut}(C_2))$ be the subgroup of $G$ of automorphisms acting diagonally.
 
\begin{remark} 	
By {\cite[Proposition 3.13]{Cat00}} we may assume (and we do!)  that 
the action is \textit{minimal}, i.e. both projections $G^0\to \Aut(C_i),\, i=1,2$ are injective.

\end{remark}

If $G^0=G$ the action of $G$ is  called \textit{unmixed}, otherwise \textit{mixed}. In the latter case $C_1\cong C_2=:C$, $G^0$ is an index 2 subgroup of $G$ (indeed $\Aut(C\times C)= \Aut(C) \rtimes \mathbb Z_2$ by  \cite[Corollary 3.9]{Cat00}), 
and we identify  $G^0<\Aut(C) \times \Aut(C)$ with its projection onto the first factor.

\begin{remark} 	
i) It follows from the definition that a surface  isogenous to a product  $S$  is a smooth minimal surface of  general type
with invariants (see \cite[Theorem 3.4]{Cat00})
\[ \chi(S)= \frac{(g(C_1)-1)(g(C_2)-2)}{|G|}\,,\quad K^2_S=8\chi(S)\,,\quad e(S)=4\chi(S) \,, \]
and $q(S)=g(C_1/G)+ g(C_2/G)$ in the unmixed case, and  $q(S)=g(C/G)$ in the mixed case.

ii) Surfaces isogenous to a product have been studied by several authors, and we have nowadays a complete classification for  $p_g=q$, i.e. $K^2=8$ (see \cite{BCG08,Frap13,CP09,pen11}). 

iii) In \cite[Lemma 3.6]{KL19} Keum and the second author showed that surfaces isogenous to a product of unmixed type with $p_g=0$ are Mori dream spaces, giving a description of their effective, nef and semiample cones.
 \end{remark}

From now on we focus on the mixed case. We have the following description of minimal mixed actions:
\begin{theorem}[{\cite[Proposition 3.16]{Cat00}}]
\label{thmaction} 
Let $C$ be a compact Riemann surface of genus $g(C)\geq 2$, and let $G$ be a finite subgroup of
$\Aut(C\times C)$ whose action is minimal and mixed. Fix $\tau'
\in G\setminus G^0$; it determines an element $\tau:=\tau'^2\in G^0$ and $\varphi\in\Aut(G^0)$ defined by $\varphi(h):=\tau'h\tau'^{-1}$.
Then, up to a coordinate change, $G$ acts on $C \times C$ as follows:
\begin{equation}
\label{action}
\begin{split}
g(x,y)=(gx,\varphi(g)y)\\
\tau' g(x,y)=(\varphi(g)y,\tau gx)
\end{split}
\qquad \text{for }g\in G^0\,.
\end{equation}

Conversely, for every finite subgroup $G^0 <\Aut(C),$  extension group $G$  of degree $2$ of $G^0$, fixed $\tau'\in G \setminus G^0$ and $\tau, \varphi$ defined as above, \eqref{action} defines a minimal mixed action on $C\times C$.
\end{theorem}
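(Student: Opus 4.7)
The plan is to treat the two directions separately. For the forward direction I start with an arbitrary minimal mixed action, write $\tau'$ in its most general form, and kill the two superfluous automorphisms by a single coordinate change on the second factor. By minimality, both projections $\pi_1, \pi_2 : G^0 \hookrightarrow \Aut(C)$ are injective, so each $g \in G^0$ acts as $g(x, y) = (\pi_1(g) x, \pi_2(g) y)$, and $\tau'$, which exchanges the factors, takes the general form $\tau'(x, y) = (\sigma_1 y, \sigma_2 x)$ for some $\sigma_1, \sigma_2 \in \Aut(C)$. I then apply the coordinate change $\psi(x, y) := (x, \sigma_1 y)$. A short calculation yields $\psi \tau' \psi^{-1}(x, y) = (y, \sigma_1 \sigma_2 x)$ and $\psi g \psi^{-1}(x, y) = (\pi_1(g) x, \sigma_1 \pi_2(g) \sigma_1^{-1} y)$.

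Next, I identify $G^0$ with $\pi_1(G^0) \subset \Aut(C)$ and verify two equalities. Firstly, $\sigma_1 \pi_2(g) \sigma_1^{-1} = \pi_1(\varphi(g))$, obtained by computing the first projection of $\tau' g \tau'^{-1}$ in the original coordinates and reading it off. Secondly, $\sigma_1 \sigma_2 = \pi_1(\tau)$, which follows by squaring $\psi \tau' \psi^{-1}$ and reading off either projection. Together these reproduce exactly the two lines of \eqref{action}.

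For the converse direction, I take \eqref{action} as the definition of maps $\rho(g), \rho(\tau' g) \colon C \times C \to C \times C$. The main task is to verify that $\rho$ is a homomorphism $G \to \Aut(C \times C)$, which reduces to checking the three multiplication cases $g g'$, $g \cdot \tau' g'$, and $\tau' g \cdot \tau' g'$ inside $G$. Two abstract identities are needed: $\varphi(\tau) = \tau$ and $\tau h \tau^{-1} = \varphi^2(h)$ for $h \in G^0$, both immediate from $\tau = (\tau')^2$; these together translate into the relation $\tau' g \tau' = \tau \varphi^{-1}(g)$, which is exactly what makes the two compositions of \eqref{action} agree with the group law. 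Minimality and mixedness follow automatically: $\varphi$ is an automorphism of $G^0$ so both projections of $G^0$ into $\Aut(C)$ are faithful, while $\tau'$ exchanges factors by construction.

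The principal difficulty throughout is keeping track of identifications. The automorphism $\varphi \in \Aut(G^0)$ is defined purely group-theoretically, whereas its geometric realisation involves the explicit $\sigma_i \in \Aut(C)$ describing $\tau'$. The one nonformal step is to verify that the single coordinate change $\psi = (\mathrm{id}, \sigma_1)$ simultaneously normalises $\tau'$ and intertwines the geometric conjugation with the abstract $\varphi$; once this is achieved, the remaining verifications in both directions are routine manipulations in the semidirect product $\Aut(C)^2 \rtimes \mathbb{Z}_2$.
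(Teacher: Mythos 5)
This statement is quoted in the paper as \cite[Proposition 3.16]{Cat00} and no proof is given there, so there is no internal argument to compare against. Your proof is correct and is essentially the standard argument from Catanese's paper: writing $\tau'(x,y)=(\sigma_1 y,\sigma_2 x)$, normalizing by the coordinate change $(x,y)\mapsto(x,\sigma_1 y)$, identifying $G^0$ with its image under the first projection so that $\sigma_1\pi_2(g)\sigma_1^{-1}=\varphi(g)$ and $\sigma_1\sigma_2=\tau$, and, for the converse, checking the three composition cases using $\varphi(\tau)=\tau$ and $\varphi^2(h)=\tau h\tau^{-1}$; your verification of minimality and mixedness of the resulting action is also adequate.
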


The  description of surfaces isogenous to a product  is accomplished through the theory 
  of Galois coverings between projective curves (cf. \cite[Section III.3, III.4]{Mir}).

\begin{definition}\label{gv}
Given integers  $g'\geq 0$, $m_1, \ldots, m_r > 1$  and   a finite group $H$, a \textit{generating vector} for $H$ of type $[g';m_1,\ldots ,m_r]$ is a $(2g'+r)$-tuple of
elements of $H$:
\[V:=(d_1,e_1,\ldots, d_{g'},e_{g'};h_1, \ldots, h_r)\]
such that $V$ generates $H$, $\prod_{i=1}^{g'}[d_i,e_i]\cdot h_1\cdot h_2\cdots h_r=1$ and $\ord(h_i)=m_i$.
\end{definition}

\begin{theorem}[Riemann Existence Theorem]\label{RET}
A finite group $H$ acts as a group of automorphisms of
some compact Riemann surface $C$ of genus $g(C)$, 
if and only if  there exists a generating vector $V:=(d_1,e_1, \ldots , d_{g'},e_{g'};h_1, \ldots , h_r)$ 
for  $H$ of type $[g';m_1,\ldots,m_r]$,
such that  the  Hurwitz formula holds:
\[
2g(C) -2 = |H| \bigg( 2g'-2+\sum_{i=1}^r \frac{m_i - 1}{m_i} \bigg). 
\]
In  this case $g'$ is the genus of the quotient Riemann surface $C' := C/H$
and the $H$-cover $C\to C'$ is branched over $r$ points $\{x_1, \ldots , x_r\}$ with
branching indices $m_1, \ldots ,m_r$, respectively.
Moreover,  the \textit{stabilizer set of $V$}, defined as 
\[\Sigma_V:= \bigcup_{g\in H}\bigcup_{j\in \mathbb Z}\bigcup_{i=1}^r \{ g\cdot h_i^j\cdot g^{-1}\}\,,\]
coincides with the subset of $H$ consisting of the automorphisms of $C$ having some fixed points.
\end{theorem}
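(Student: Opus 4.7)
The plan is to establish both directions of the biconditional via the classical correspondence between finite Galois covers of curves and surjective homomorphisms from the fundamental group of the base minus the branch locus. For the forward direction, suppose $H$ acts on a compact Riemann surface $C$ with quotient $C' := C/H$ of genus $g'$, and that the induced cover $\pi\colon C\to C'$ is branched over $\{x_1,\ldots,x_r\}$ with indices $m_1,\ldots,m_r$. Removing the branch locus and its preimage produces an unramified $H$-Galois cover $\pi^\circ\colon C^\circ\to C'^\circ:=C'\setminus\{x_1,\ldots,x_r\}$. Fixing a base point, one has the standard presentation of $\pi_1(C'^\circ)$ on generators $d_1,e_1,\ldots,d_{g'},e_{g'},h_1,\ldots,h_r$ with the single relation $\prod_{i=1}^{g'}[d_i,e_i]\cdot h_1\cdots h_r=1$, where $h_i$ is a small loop around $x_i$. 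The cover $\pi^\circ$ corresponds to a surjection $\rho\colon\pi_1(C'^\circ)\twoheadrightarrow H$, and the images $V:=(\rho(d_i),\rho(e_i);\rho(h_i))$ form a generating vector of the required type, with $\ord(\rho(h_i))=m_i$ since the ramification index at each point of $\pi^{-1}(x_i)$ equals $m_i$.

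For the converse, start with a generating vector $V$ for $H$ of type $[g';m_1,\ldots,m_r]$. Choose a compact Riemann surface $C'$ of genus $g'$ and $r$ distinct points $x_1,\ldots,x_r\in C'$. The relation defining a generating vector is precisely what is needed for the assignment $d_i\mapsto d_i$, $e_i\mapsto e_i$, $h_i\mapsto h_i$ to extend to a surjective group homomorphism $\rho\colon\pi_1(C'^\circ)\twoheadrightarrow H$. The kernel of $\rho$ defines an unramified topological $H$-Galois cover $\widetilde C^\circ\to C'^\circ$. By the analytic Riemann Existence Theorem (equivalently, by GAGA together with Grauert–Remmert), $\widetilde C^\circ$ inherits a unique complex-analytic structure making it the complement of finitely many points in a compact Riemann surface $C$, and the cover extends uniquely to a finite holomorphic $H$-Galois cover $\pi\colon C\to C'$. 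Around a point lying over $x_i$, the local monodromy is a conjugate of $\rho(h_i)$, so the ramification index there is $\ord(\rho(h_i))=m_i$. The Hurwitz formula is then the Riemann--Hurwitz formula applied to $\pi$.

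For the stabilizer statement, an element $g\in H$ has a fixed point on $C$ exactly when it lies in the isotropy group of some point of $C$; such isotropy groups are non-trivial only at the ramification points, i.e.\ over the $x_i$. By the local description above, the stabilizer of a chosen point in $\pi^{-1}(x_i)$ is the cyclic group generated by $\rho(h_i)=h_i$, and the stabilizers of the other points in the same fibre are its $H$-conjugates. Running $i$ over $\{1,\ldots,r\}$ and taking all powers and $H$-conjugates yields exactly the set $\Sigma_V$ as defined.

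The only step that is not purely formal group-theoretic bookkeeping is the extension of the topological étale cover $\widetilde C^\circ\to C'^\circ$ to a holomorphic branched cover of compact Riemann surfaces; this is the genuine analytic content and is where the name ``Riemann Existence Theorem'' is justified. All other ingredients -- the presentation of $\pi_1$ of a punctured surface, the translation between covers and subgroups of $\pi_1$, and the Riemann--Hurwitz formula -- are standard, so the proof reduces to carefully matching the combinatorial data on both sides.
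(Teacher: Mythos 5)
The paper does not prove this statement: it is quoted as a classical result, with the proof deferred to the cited reference (Miranda, Sections III.3--III.4). Your sketch is correct and is essentially that standard argument -- the correspondence between $H$-Galois branched covers and surjections $\pi_1(C'\setminus\{x_1,\dots,x_r\})\twoheadrightarrow H$, the analytic extension of the unramified cover across the punctures, Riemann--Hurwitz for the genus formula, and the identification of the point stabilizers over $x_i$ with the $H$-conjugates of $\langle h_i\rangle$, which gives exactly $\Sigma_V$.
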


According to \cite{BCG08}, a surface isogenous to a product of mixed type $(C\times C)/G$ determines and is determined 
(using the Riemann Existence Theorem) by:
a finite group $G$,  an index $2$ subgroup $G^0$, a  generating vector $V$ for $G^0$  of type $[g';m_1,\ldots,m_r]$, 
such that: i) $\Sigma_V\cap\varphi(\Sigma_V)=\{1_G\}$ (i.e. no isolated fixed points);
ii)  there are no $g\in G\setminus G^0$, such that $ g^2\in\Sigma_V$ (i.e. no fixed curves);
and $r$ points on $C':=C/G^0$. Hence each surface isogenous to a product of mixed type depends
on $3g'-3+r$ parameters and to classify  surfaces isogenous to a product  is equivalent to classify groups with the right properties.
This algebraic description has been used in \cite{BCG08,Frap13} to  obtain the following classification.

\begin{theorem}[\cite{BCG08,Frap13}]\label{thm:classification}
Let $S:=(C \times C)/G$ be a surface isogenous to a product of mixed type with $p_g=0$. 
Then $S$ belongs to one of the 5 families  in Table \ref{q0}. 

Moreover, each entry in the table gives an irreducible connected
component of dimension $D$ in the moduli space of minimal smooth complex projective surfaces with $\chi=1$ and $K^2=8$.

\begin{table}[h!]
 \begin{tabular}{c|c|c|c|c|c|c|c}
$G$ & $Id(G)$ & $G^0$ & $Id(G^0)$ & $g(C)$ & Type & $H_1(S,\mathbb Z)$&$D$\\
\hline
 $D_{2,8,5}\rtimes\mathbb{Z}_2^2$ &  $G(64,92)$& $\mathbb{Z}_2^2\times D_4$ &  $G(32,46)$&$9$&$[0;2^5]$& $ \mathbb Z_2^3\times\mathbb Z_8$ &$2$\\
$G(256,3679)$ &$G(256,3679)$& $(\mathbb{Z}_2^3\rtimes \mathbb{Z}_4)\rtimes \mathbb{Z}_4$ &  $G(128,36)$&$17$&$[0;4^3]$& $\mathbb Z_2^2\times\mathbb Z_4^2$ &$0$\\
$G(256,3678)$& $G(256,3678)$& $(\mathbb{Z}_2^3\rtimes \mathbb{Z}_4)\rtimes \mathbb{Z}_4$ &  $G(128,36)$&$17$&$[0;4^3]$&$\mathbb Z_2^2\times\mathbb Z_4^2$ &$0$\\
 $G(256,3678)$& $G(256,3678)$& $(\mathbb{Z}_2^3\rtimes \mathbb{Z}_4)\rtimes \mathbb{Z}_4$ & $G(128,36)$&$17$&$[0;4^3]$&$ \mathbb Z_2^4\times\mathbb Z_4$&$0$ \\
$G(256,3678)$& $G(256,3678)$& $(\mathbb{Z}_2^3\rtimes \mathbb{Z}_4)\rtimes \mathbb{Z}_4$ &  $G(128,36)$&$17$&$[0;4^3]$&$ \mathbb Z_4^3$&$0$
\end{tabular} \caption{Surfaces isogenous to a product of mixed type with $p_g=q=0$}	\label{q0}
\end{table}
 \end{theorem}

In Table \ref{q0} we use the following notation: columns  $Id(G)$ and $Id(G^0)$  report the MAGMA (\cite{MAGMA}) identifier of the groups $G$ and $G^0$: $G(a,b)$ denotes the $b^{th}$ group of order $a$ in the database of Small Groups.
The column Type gives the abbreviated type of the generating vector for  $G^0$, e.g. $[0; 2^5]$ stands for
$[0; 2,2,2,2,2]$.

\subsection{Extra Automorphisms} \label{E_A}
Let us consider families 2-5 in Table \ref{q0}.
In these  cases   $S$ is uniquely determined by $G$ and  a  generating vector for $G^0$ of  type $[0;4^3]$, because these last 2 data determine a unique (up to isomorphism)  curve $C$,
being  the covering $c_1:C\to C/G^0=\mathbb P^1$ branched over 3 points.
We claim that $\Aut(C)$ is much bigger than $G^0$.

We may assume that $c_1$ is  branched over $P_1:=[1:1]$, $P_2:=[1:\xi]$, $P_3:=[1:\xi^2]$, 
($\xi:=\exp(\frac{2\pi i}{3})$).
The rotation $\sigma\colon \mathbb P^1\to \mathbb P^1$, $\sigma[x:y]=[x:\xi y]$ has 2 fixed points: $\infty=[0:1]$ and $0=[1:0]$, and permutes the $P_i$. 
Let $c_2:=\mathbb P^1 \to \mathbb P^1, [x:y]\mapsto [x^3:y^3]$ be the quotient map by  $\mathbb Z_3=\langle \sigma\rangle$.
The composition $c_2\circ c_1:C\to \mathbb P^1$ is then branched over 3 points with branching indices $3,3,4$.
We then have a further involution on $\mathbb P^1$: $\rho:\mathbb P^1\to \mathbb P^1$, $\rho[x:y]=[y:x]$, 
switching the images of  $0$ and $\infty$ and having $[1:1]$ (image of the $P_i$) and $[1:-1]$ as fixed points.
Let $c_3:=\mathbb P^1 \to \mathbb P^1, [x:y]\mapsto [xy: x^2+y^2]$ be the quotient map by  $\mathbb Z_2=\langle \rho\rangle$.
The composition $c:=c_3\circ c_2\circ c_1\colon C\to \mathbb P^1$ is a degree 768 covering branched over 3 points with branching  indices $2,3,8$.
Next we show that $c\colon C\to \mathbb P^1$ is Galois.

The supporting MAGMA script  available at 
\url{http://www.staff.uni-bayreuth.de/~bt301744/} 
shows that there exists a unique group $H$ of order $768$ having a generating vector of type $[0; 2,3,8]$,
namely $H:=G(768, 1085341)$. This group has $G^0$ as normal subgroup, more precisely $G^0=[H',H']$, where  $H':=[H,H]$ 
is the commutator subgroup of $H$. We now note that if $(a,b,c)$ is a generating  vector of type $[0; 2,3,8]$ for $H$,
then $(d,e,f):=(aba^{-1},b,c^2)$ is a generating  vector of type $[0; 3,3,4]$ for $H'=G(384,4)$ 
 and $(efe^{-1},e^2fe^{-2},f)$ is a generating  vector of type $[0; 4^3]$ for $G^0$. 
The mentioned MAGMA script shows that
for all 4 families there exists a generating  vector $(a,b,c)$ of type $[0;2,3,8]$ for $H$ whose induced generating  vector for $G^0$ is the given one,
whence  the covering $c\colon C\to \mathbb P^1$ is $H$-Galois, and the  embedding $H < \Aut(C)$ is
given by the generating  vector $(a,b,c)$.

\section{$G$-equivariant geometry of $C \times C$}

In this section we explain how to construct effective divisors on a surface isogenous to a product of mixed type.

Let $S:=(C\times C )/G$ be a surface isogenous to a product of mixed type and 
let $\eta: C\times C \to S$  be the quotient map.
For  $f\in \Aut(C)$ we denote by $\Gamma_f $ its graph: $\Gamma_f:=\{(x,fx )\in C \times C\mid x \in C\}$. 
\begin{lemma}\label{act_Gamma}
Let $h$ be in $G^0$, then $h(\Gamma_f)= \Gamma_{\varphi(h)fh^{-1}}$ and $ \tau'h(\Gamma_f)= \Gamma_{\tau h  f^{-1}\varphi(h^{-1})}$.
\end{lemma}
\begin{proof}
The statement is a straightforward computation using the formula \eqref{action}:
\begin{eqnarray*}
h(x,fx)=(hx, \varphi(h) f x)\stackrel{y=hx}{=} 
(y, \varphi(h)fh^{-1}  y)  &\quad \Longrightarrow\quad &
h(\Gamma_f)= \Gamma_{\varphi(h)fh^{-1}}\\
\tau'h(x,fx)=(\varphi(h) f x, \tau h x)\stackrel{z=\varphi(h) fx}{=} 
(z, \tau h  f^{-1}\varphi(h^{-1}) z ) & \quad \Longrightarrow\quad &
\tau'h(\Gamma_f)= \Gamma_{\tau h  f^{-1}\varphi(h^{-1})}
\end{eqnarray*}
\end{proof}

Let $H$ be a subgroup of $\Aut(C)$ containing $G^0$: $G^0<H< \Aut(C)$.
By Lemma \ref{act_Gamma}, the $G$ action on $C\times C$ induces an action on the set $\{\Gamma_f \mid f \in H\}$. Let $f\in H$ and consider the orbit of $\Gamma_f$: $\{\gamma(\Gamma_{f}) \mid \gamma \in G\}= \{\Gamma_{f_1},\ldots, \Gamma_{f_n}\}$, where  $n$ is the index of the stabilizer subgroup $\{\gamma \in G \mid \gamma(\Gamma_{f})=\Gamma_{f}\}$ of $\Gamma_f$.
The sum of these $n$ effective divisors
\[\tilde D:= \left(\sum_{\gamma \in G}\gamma(\Gamma_{f})\right)_{red}=\Gamma_{f_1}+\ldots+\Gamma_{f_n}\,,\]
is  $G$-invariant, so it yields an effective 
 divisor $D:=\eta_*(\tilde D)_{red}$ on $S$, and
$\tilde D= \eta^*D$.
Here the subscript ``red'' means that we consider the reduced structure of that effective divisor, i.e.~ every prime component is taken with multiplicity one.

 We call  $D$ the  \textit{orbit divisor} induced by  $f$.
Note that $D$ is irreducible, as the image of an irreducible divisor: $D= \eta(\Gamma_f)$.

\begin{lemma} Let $D:=\eta_*(\Gamma_{f_1}+\ldots+\Gamma_{f_n})_{red}$ and  $D':=\eta_*(\Gamma_{f'_1}+\ldots+\Gamma_{f'_m})_{red}$ be two distinct orbit divisors on $S$ induced by $f$ and  $f'$ respectively.
 Then
\begin{eqnarray}
D.D'&=& \frac{1}{|G|} \sum_{i=1}^{n}  \sum_{j=1}^{m} \Gamma_{f_i} . \Gamma_{f'_j}\\
D^2 &=& \frac{-2(g(C)-1)n}{|G|}  +\frac{2}{|G|}\sum_{1\leq i<j\leq n} \Gamma_{f_i}. \Gamma_{f_j}  \\
K_S.D&= &\frac{ 4(g(C)-1) }{|G|}n \,.
\end{eqnarray}
\end{lemma}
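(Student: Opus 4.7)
The plan is to pull everything back via the étale Galois cover $\eta \colon C \times C \to S$, where ``étale'' is crucial since $G$ acts freely. This gives two key facts I will use throughout: $\eta^*K_S = K_{C\times C}$, and for any divisors $\alpha,\beta$ on $S$ one has $\eta^*\alpha.\eta^*\beta = |G|(\alpha.\beta)$ by the projection formula (together with $\deg \eta = |G|$).

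First I would identify the pullbacks. By construction, $\eta^{-1}(D)$ is the $G$-orbit of $\Delta_f$, which is $\Delta_{f_1}+\ldots+\Delta_{f_n}$ as a set, and since $\eta$ is étale the scheme-theoretic pullback is reduced of the same shape. A small remark is needed here to justify that $\eta^*D = \Delta_{f_1}+\ldots+\Delta_{f_n}$ and not a positive multiple thereof: this is immediate because $\eta$ is unramified and the $\Delta_{f_i}$ appear with multiplicity one in the orbit. The same holds for $\eta^*D'$. The first formula then reads off directly:
\[
|G|\, D.D' \;=\; \eta^*D.\eta^*D' \;=\; \sum_{i=1}^n\sum_{j=1}^m \Delta_{f_i}.\Delta_{f_j'},
\]
using bilinearity of the intersection pairing on $C\times C$.

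For the self-intersection, I would expand
\[
\eta^*D.\eta^*D \;=\; \sum_{i=1}^n \Delta_{f_i}^2 + 2\sum_{1\le i<j\le n} \Delta_{f_i}.\Delta_{f_j},
\]
so the task reduces to computing $\Delta_{f_i}^2$. Since $\Delta_{f_i} \cong C$ as the graph of an automorphism, and the two projections $p_1,p_2 \colon C\times C\to C$ restrict to isomorphisms on $\Delta_{f_i}$, the adjunction formula combined with $K_{C\times C} = p_1^*K_C + p_2^*K_C$ gives $K_{C\times C}.\Delta_{f_i} = 2(2g(C)-2)$, whence
\[
\Delta_{f_i}^2 \;=\; (2g(C)-2) - K_{C\times C}.\Delta_{f_i} \;=\; -2(g(C)-1).
\]
Summing over $i$ and dividing by $|G|$ yields exactly the stated expression for $D^2$.

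Finally for $K_S.D$, since $\eta$ is étale the formula $\eta^*K_S = K_{C\times C}$ holds, so
\[
|G|\, K_S.D \;=\; K_{C\times C}.\eta^*D \;=\; \sum_{i=1}^n K_{C\times C}.\Delta_{f_i} \;=\; n\cdot 2(2g(C)-2),
\]
giving $K_S.D = \frac{4(g(C)-1)n}{|G|}$. The only potential obstacle is the bookkeeping in step one: ensuring that the pullback of the reduced orbit divisor is itself reduced, which however follows purely formally from $\eta$ being étale and $G$ acting freely on the $\Delta_{f_i}$ up to the stabilizer counted by $n$; everything else is a direct application of adjunction and the projection formula.
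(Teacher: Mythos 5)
Your proposal is correct and follows essentially the same route as the paper: pull back along the \'etale quotient $\eta$, use the projection formula with $\deg\eta=|G|$, compute $\Delta_{f_i}^2=-2(g(C)-1)$ by adjunction from $K_{C\times C}.\Delta_{f_i}=4(g(C)-1)$, and use $\eta^*K_S=K_{C\times C}$ for the last formula. The only cosmetic difference is that you express $K_{C\times C}$ as $p_1^*K_C+p_2^*K_C$ where the paper writes it numerically as $2(g(C)-1)(F_1+F_2)$; these are the same computation.
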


\begin{proof}
Using the projection formula we get immediately the first formula and the equation
\[ D^2 =\frac{1}{|G|} (\eta^*D)^2= \frac{1}{|G|} \left(\sum_{i=1}^n \Gamma_{f_i}\right)^2=
 \frac{1}{|G|} \left(\sum_{i=1}^n \Gamma_{f_i}^2 +2\sum_{1\leq i<j\leq n} \Gamma_{f_i}. \Gamma_{f_j} \right)\,.
\]
The values $ \Gamma_{f_i}^2$ can be easily computed using the
adjunction formula $K_{C\times C}. \Gamma_{f_i}+ \Gamma_{f_i}^2 =2(g(C)-1)$:
\[\Gamma_{f_i}^2 =  -2(g(C)-1) \,,\]
since
\[K_{C\times C}. \Gamma_{f_i}= 2(g(C)-1)(F_1+F_2). \Gamma_{f_i}= 4(g(C)-1)\,,\]
where $F_1,F_2$ are the general fiber of the projection respectively onto  the first and  the
second coordinate.

Since $\eta$ is \'etale, $K_{C\times C}= \eta^* K_S$ and the third formula follows.
\end{proof}

\begin{lemma}\label{Int_transv}
Let $f_1\neq  f_2 \in \Aut(C)$, 
then $\Gamma_{f_1}$ and $\Gamma_{f_2}$ intersect transversally.
\end{lemma}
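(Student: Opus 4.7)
The plan is to analyze the intersection $\Delta_{f_1}\cap \Delta_{f_2}$ set-theoretically first and then verify that the tangent spaces to the two graphs are distinct at every intersection point.

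First, I would observe that a point $(x,y)\in C\times C$ lies in $\Delta_{f_1}\cap\Delta_{f_2}$ if and only if $y=f_1(x)=f_2(x)$, equivalently $x$ is fixed by the automorphism $g:=f_2^{-1}f_1\in\Aut(C)$. Since $f_1\neq f_2$, we have $g\neq \Id$, so $g$ is a non-trivial automorphism of a compact Riemann surface of genus $\geq 2$; in particular $g$ has finite order and only finitely many fixed points, so $\Delta_{f_1}\cap\Delta_{f_2}$ is a finite set.

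Next I would compute tangent spaces. The graph $\Delta_f$ is the image of the embedding $C\hookrightarrow C\times C$, $x\mapsto(x,f(x))$, so at a point $(x_0,f(x_0))$ its tangent line is
\[T_{(x_0,f(x_0))}\Delta_f = \bigl\{(v,(df)_{x_0}(v)) : v\in T_{x_0}C\bigr\}\subset T_{x_0}C\oplus T_{f(x_0)}C.\]
Thus, at an intersection point $(x_0,y_0)$ with $y_0=f_1(x_0)=f_2(x_0)$, transversality is equivalent to the two graphs $\{(v,(df_1)_{x_0}(v))\}$ and $\{(v,(df_2)_{x_0}(v))\}$ being distinct lines in $T_{x_0}C\oplus T_{y_0}C$, i.e.\ to $(df_1)_{x_0}\neq (df_2)_{x_0}$, or equivalently $(dg)_{x_0}\neq \Id$ for $g=f_2^{-1}f_1$.

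So the whole problem reduces to the classical rigidity statement: a non-trivial automorphism $g$ of a compact Riemann surface of genus $\geq 2$ cannot fix a point $x_0$ with trivial differential there. The standard way I would justify this is by local linearization: since $g$ has finite order $n$ and fixes $x_0$, choosing a local coordinate $z$ centered at $x_0$, the averaging $w:=\frac{1}{n}\sum_{k=0}^{n-1}\lambda^{-k}(g^k)^*z$ with $\lambda=(dg)_{x_0}$ gives a coordinate in which $g$ acts as $w\mapsto \lambda w$. If $\lambda=1$, then $g$ is the identity on a neighborhood of $x_0$, hence on all of $C$ by the identity principle, contradicting $g\neq\Id$. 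Therefore $(dg)_{x_0}\neq \Id$, and the two tangent lines are distinct, proving transversality.

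The only mildly non-routine step is the linearization argument in the last paragraph, but this is a well-known fact about automorphisms of Riemann surfaces (it also follows from the fact that $\Aut(C)$ is finite and that the quotient $C\to C/\langle g\rangle$ is ramified with cyclic local monodromy). All remaining steps are essentially bookkeeping with the differential of the graph embedding.
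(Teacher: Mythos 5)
Your proof is correct and follows essentially the same route as the paper: both reduce the intersection points of $\Delta_{f_1}$ and $\Delta_{f_2}$ to fixed points of the non-trivial automorphism $f_1^{-1}f_2$ and then conclude transversality from the fact that its differential at such a fixed point cannot be the identity (the paper cites Cartan's Lemma for the linearization, while you carry out the averaging argument explicitly). The only differences are presentational: you compute the tangent lines of the graphs directly instead of translating to the diagonal by an automorphism of $C\times C$.
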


\begin{proof}
The curves intersect in points of the form $(x, f_1x)= (x,f_2x)$, i.e, $f_1^{-1} f_2\in \Stab(x)$.
Up to acting by the automorphism $\alpha \colon C\times C \to  C\times C \,, \ (u,v) \mapsto (u, f_1^{-1} v)$,
we can assume that the curves have the form $\{(u,u)\}$, $\{(u, f_1^{-1} f_2 u)\}$ and that the intersection 
takes place at the point $(x_0,x_0)= (x_0, f_1^{-1} f_2 x_0)$.
By Cartan's Lemma, the local behaviour of $f_1^{-1} f_2$ is the same as that of the linearization,
since $ f_1^{-1} f_2\in \Stab(x_0)$ is not the identity, then $\Jac(f_1^{-1} f_2)_{x_0}\neq 1$,
and so the curves intersect transversally.
\end{proof}

By Lemma \ref{Int_transv}, to determine $\Gamma_{f_1}.\Gamma_{f_2}= |\Gamma_{f_1}\cap\Gamma_{f_2}|$, it is enough
to count the number of points on $C$ fixed by  the automorphism $f_1^{-1} f_2 \in H$.
This can be done using the Riemann Existence Theorem  for the covering
  $C\to C/H$. Here we use the notation of Theorem \ref{RET} and   we denote by 
$K_j:=\langle h_j\rangle$ the stabilizer of a point in the fiber over $x_j \in C/H$,  and by 
$\textbf 1_A(\cdot):G\to \{0,1\}$   the indicator function of the subset $A\subseteq G$, i.e.~$1_A(g)=1$ if $g\in A$, and  $1_A(g)=0$ else.

\begin{lemma}\label{fix}
The number of fixed points of the non-trivial automorphism $ f\in H$ is:
\[|\Fix( f)|=\sum_{j=1}^r \frac{1}{m_j}\sum_{g\in H} \textbf 1_{gK_jg^{-1}}( f)\,.\]
\end{lemma}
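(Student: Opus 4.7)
The plan is to use the geometric description provided by the Riemann Existence Theorem (Theorem \ref{RET}) together with a standard orbit-stabilizer count. Let $\pi \colon C \to C/H$ be the quotient map. Recall that the stabilizer set $\Sigma_V$ coincides with the set of non-identity elements of $H$ having at least one fixed point on $C$. More precisely, a point $p \in C$ has non-trivial stabilizer if and only if $\pi(p) = x_j$ for some $j \in \{1,\dots,r\}$, and in that case $\Stab(p)$ is conjugate to $K_j = \langle h_j \rangle$.

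First I would partition $\Fix(f)$ according to which branch point its image lies above:
\[
|\Fix(f)| = \sum_{j=1}^{r} \bigl|\Fix(f) \cap \pi^{-1}(x_j)\bigr|,
\]
using that $f$ is non-trivial, so fixed points can only occur over the branch locus.

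Next I would fix, for each $j$, a reference point $p_j \in \pi^{-1}(x_j)$ with $\Stab(p_j) = K_j$, and parametrise the fiber via the $H$-action: the map $H \to \pi^{-1}(x_j)$, $g \mapsto g \cdot p_j$, is surjective with every fiber of cardinality $|K_j| = m_j$. Since $\Stab(g \cdot p_j) = gK_jg^{-1}$, the automorphism $f$ fixes $g \cdot p_j$ if and only if $f \in gK_jg^{-1}$. Counting $g$'s rather than points therefore overcounts by the factor $m_j$, giving
\[
\bigl|\Fix(f) \cap \pi^{-1}(x_j)\bigr| = \frac{1}{m_j} \sum_{g \in H} \mathbf{1}_{gK_jg^{-1}}(f).
\]
Summing over $j$ yields the claimed formula.

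There is no real obstacle here: the proof is purely combinatorial once the Riemann Existence Theorem is invoked to pin down where the fixed points live and what their stabilizers look like. The only point that requires care is the observation that the stabilizer of $g \cdot p_j$ is exactly $g K_j g^{-1}$ (and in particular depends only on the coset $gK_j$), so that the factor $1/m_j$ correctly compensates for the overcount when one sums over all $g \in H$ instead of over coset representatives.
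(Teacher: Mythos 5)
Your proof is correct and follows essentially the same route as the paper's: identify the fiber over each branch point $x_j$ with the cosets $gK_j$ (equivalently, the $H$-orbit of a reference point with stabilizer $K_j$), note that $f$ fixes the point corresponding to $gK_j$ exactly when $f\in gK_jg^{-1}$, and divide by $m_j$ to correct for the $m_j$ representatives of each coset. The only difference is that you spell out explicitly that fixed points of a non-trivial automorphism lie over the branch locus, which the paper leaves implicit.
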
	

\begin{proof} By the (proof of) Riemann Existence Theorem, the fiber of $x_j \in C/H$ is in
1-1 correspondence with the cosets $\{gK_j\}_{g\in H}$.
So  $f\in H$ fixes the point $gK_j (=:y)$ if and only if $ f\in  gK_jg^{-1} (= \Stab(y))$. Since every coset has $|K_j|=m_j$ representatives, the sum 
$\sum_{g\in H} \textbf 1_{gK_jg^{-1}}( f)$ equals $m_j$-times the number of fixed points over $x_j$.
\end{proof}

\begin{theorem}\label{mainThm}
Let $S$ be a surface isogenous to a product of mixed type with $p_g=0$.
 Then $\Eff(S)=\Nef(S)=\SAmp(S)$.
 In particular $S$ is a Mori dream surface and does not have a negative curve.
\end{theorem}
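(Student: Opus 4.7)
The plan is to apply Proposition \ref{DivFQ} to each of the five families of Table \ref{q0}. Since these surfaces are fake quadrics with $\rho(S)=2$, the conclusion follows once one exhibits, in each case, four distinct effective irreducible divisors $D_1,\ldots,D_4$ on $S$ satisfying
\[
D_i^2=0,\qquad D_1\cdot D_4=D_2\cdot D_3=0,\qquad D_1\cdot D_2=D_1\cdot D_3=D_4\cdot D_2=D_4\cdot D_3>0.
\]

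I would produce the $D_i$ as orbit divisors $\eta(\Delta_f)$ for suitable $f$ in a well-chosen subgroup $H$ with $G^0<H<\Aut(C)$. For family 1, whose cover $C\to C/G^0\cong\mathbb P^1$ is branched over $5$ points, I expect $H=G^0$ already to yield four $G$-orbits of graphs with the required geometry. For families 2--5 the cover $C\to C/G^0$ is branched over only $3$ points, and taking $H=G^0$ produces a single numerical class of orbit divisors (as noted in the introduction); the natural fix is to take $H=G(768,1085341)$ prepared in Section \ref{E_A}, so that $C\to C/H$ is the Galois $(2,3,8)$-triangle cover with a much larger pool of graphs to choose from.

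For a given $H$, the intersection numerology of the orbit divisors is controlled by the three formulas preceding Lemma \ref{Int_transv}: everything reduces to the pairings $\Delta_f\cdot\Delta_{f'}=|\Fix(f^{-1}f')|$ (Lemma \ref{Int_transv}), which are in turn computed from Theorem \ref{RET} applied to $C\to C/H$. Concretely I would enumerate the $G$-orbits on $\{\Delta_f\mid f\in H\}$ via Lemma \ref{act_delta}, select two orbits whose graphs are pairwise disjoint across the two orbits and whose internal intersection balance yields $D^2=0$ (these supply $D_1,D_4$), then a second such pair of orbits in a numerically independent class supplying $D_2,D_3$, with uniform positive intersection against the first pair. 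Proposition \ref{DivFQ} then delivers both $\Eff(S)=\Nef(S)=\SAmp(S)$ and the Mori dream property, and rules out negative curves.

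The main obstacle is the existence of such a configuration. Translating through Lemma \ref{Int_transv}, the condition $D_1\cdot D_4=0$ means that every element of the form $f^{-1}f'$, with $f,f'$ varying in the two chosen orbits, acts freely on $C$, while the self-intersection condition $D^2=0$ imposes a precise balance on the number of fixed points of the elements $f_i^{-1}f_j$ within a single orbit. Verifying that the chosen $H$ admits two such pairs of orbits in distinct numerical classes is a finite group-theoretic check, naturally carried out with the MAGMA script referenced in Section \ref{E_A}. The crux of the argument is the enlargement $G^0\subsetneq H$ in families 2--5: this is not cosmetic but essential, since otherwise $\N^1(S)$ would only contain one class of orbit divisors and the whole strategy would collapse.
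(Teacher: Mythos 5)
Your proposal follows essentially the same route as the paper: orbit divisors induced by $H=G^0$ for family 1 and by $H=G(768,1085341)$ for families 2--5, intersection numbers via Lemma \ref{Int_transv} and fixed-point counts from Theorem \ref{RET}, a MAGMA enumeration of the $G$-orbits, and then Proposition \ref{DivFQ} applied to two pairs of square-zero orbit divisors (exactly as the paper does with its four divisors in family 1 and with $D_2,D_9,D_7,D_{14}$ in families 2--5). The only part you leave open, the explicit computational verification that such configurations of orbit divisors exist, is also what the paper delegates to its MAGMA scripts, so your outline is a faithful reconstruction of the published argument.
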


\begin{proof}
We prove the statement  case by case, applying  the machinery explained in the previous section to a suitable automorphism group $H$:  $G^0 <H < \Aut(C)$.

\textbf{Family 1}: The surfaces $S$  are determined by  the group  $G:=G(64,92)$, 
the subgroup $G^0:= G(32,46)$ and a generating vector for $G^0$ of  type $[0;2^5]$  giving a  curve $C$.
As automorphism group $H$ we consider the group $G^0$.

The MAGMA script in Appendix \ref{script} shows that
the 32 curves $\Gamma_f$, $f \in H$ induce 4 distinct orbit divisors  on $S$, whose intersection products are:
\[K_X. D_i=4 \,,\quad    D_i^2=D_1.D_4=  D_2.D_3=0\,, \text{ and  } \ D_1.D_2=  D_1.D_3 =  D_2.D_4=  D_3.D_4=4\,.\]
By Lemma \ref{DivFQ}, $\Eff(S)=\Nef(S)=\SAmp(S)=\Cone \langle D_1, D_2 \rangle$ and $S$ is a Mori dream space.

\textbf{Families 2-5}:
In these cases $S$ is uniquely determined by $G$ and  a  generating vector for $G^0$ of  type $[0;4^3]$, because these last 2 data determine a unique (up to isomorphism) curve $C$.
By discussions in Section \ref{E_A},   
the group $H:=G(768, 1085341)$ acts as automorphism group on $C$ 
and contains $G^0$ as index 6 normal subgroup.

In Case 2 (in the other 3 cases we have an analogous output) the supporting MAGMA script, 
available at \url{http://www.staff.uni-bayreuth.de/~bt301744/}, shows that the elements of $H$ induce 15 distinct orbit divisors  on $S$,
whose intersection products  show that:\\
1) $D_2, D_{9}$ and $D_{13}$ are numerically  equivalent and $D_2^2=0$; \\
2) $D_7, D_{14}$ and $D_{15}$ are numerically equivalent, $D_7^2=0$,  and $D_2.D_7=16$;\\
3) $D_1, D_8, D_{10}$ and $D_{12}$ are numerically equivalent  to $(D_2 +D_7)/2$;\\
4) $D_5, D_6$,  and $D_{11}$ are numerically equivalent  to $D_2 +D_7$; \\
5) $D_3$ and $D_4$ are numerically equivalent  to $2(D_2 +D_7)$.

Applying  Lemma \ref{DivFQ} to $D_2, D_{7},D_9,D_{14}$, we get 
$\Eff(S)=\Nef(S)=\SAmp(S)=\Cone\langle D_2, D_7 \rangle$ and $S$ is a Mori dream space.

\end{proof}

\appendix
\section{Magma script}\label{script}

This script may be run at \url{http://magma.maths.usyd.edu.au/calc/}.

\begin{code_magma}
// This auxiliary function counts the number of  fix-points of f in H < Aut(C) using Lemma 3.4.
// seq is the generating vector associated to the covering p:C->C/H=P^1.
// Note that the fixed points of f are ramification points of the map p.
CountingIntersections:=function(f, seq, H) 
fix:=0;
for j in [1..#seq] do  // running over j corresponds to run over the branch points x_j of p
	c:=0; K:=sub<H|seq[j]>; 
	// c is the "local" counter of  points fixed by f  in the  fiber of x_j;
	// K is the subgroup of H  generated by seq[j], stabilizer of a point in the  fiber of x_j.
	// The stabilizers of the other points in this fiber are of the form gKg^{-1}, g in H and
	// f is in gKg^{-1} if and only if it stabilizes the corresponding point.
		for g in H do 
			if f in  {g*k*g^-1: k in K}  then c:=c+1;  
		end if; end for; 
	fix:=fix+(c/#K); 
	// #K=|K| elements g in H give the same set gKg^{-1}, so we have to divide the local counter 
	// c by #K  to obtain the number of point fixed by f in the  fiber of x_j.
end for;
return fix; 
end function;

// The algebraic data defining the family are:
G:=SmallGroup(64,92); 
ram:=[ G.2 * G.6, G.2 * G.3 * G.4, G.2 * G.3 * G.5 * G.6, G.2 * G.5, G.4 ];
G0:=sub<G|ram>;  
// G0(=G^0) and ram are the algebraic data describing the covering C->C/G0.
t:=Rep({x: x in G | x notin G0});
// t=\tau' is an element of G not in G0 which determine the mixed action of G on CxC.
genus_1:=8; //g(C)-1

// Here start the main routine.

// Step 1. We construct all orbit divisors induced by elements of G^0.
L:=[ ]; P:=Set(G0); 
while not IsEmpty(P) do 
// 1a. As long as P it is not empty we pick an element f from it.
	f:=Rep(P); 
// 1b. We construct the orbit Gamma of \Gamma_f under the G-action given by Lemma 3.1, 
// and we store in the list L (each curve is represented by the corresponding element in G0).
	Gamma:={(t*h*t^-1)*f*(h^-1) : h in G0} join {(t^2*h)*f^-1*(t*h^-1*t^-1) : h in G0}; 
	Append(~L, Gamma); 
// 1c. We remove the elements in Gamma from P and we go back to step 1a.	
	P:=P diff Gamma;
end while; 
// Each entry in L consists in an orbit of the G-action, i.e. corresponds to an orbit divisor D_i.
printf "There are 

// Step 2. For each orbit divisor D_i ("=L[i]")  we compute D_i^2 and D_i.K_S   using Lemma 3.2.
for i in [1..#L] do 
// 2a. We set n as the cardinality of L[i] and initialize the value of self  to -2*n*(g(C)-1).
	Gamma:=L[i]; n:=#Gamma; 
	self:= -2*n* genus_1; 
// 2b.	For each pair (g1,g2) of  distinct elements in L[i]  we determine the intersection product
// \Gamma_{g1}.\Gamma_{g2} = the number of fix-points of the element g1^-1*g2 in G0, 
// and we increase self accordingly (see equation (3.2)).
// Note that both pairs (g1,g2) and (g2,g1) are  considered.
	for g1 in Gamma do  	for g2 in Gamma diff {g1} do 
		int:=CountingIntersections(g1^-1*g2,ram,G0);
		self:=self+int;
	end for; end for;
// 2c. D_i^2 and D_i.K_S are then given by self/|G| and 4*(g(C)-1)*n/|G|.
	printf "D_
end for;

// Step 3. Finally, for i<j we compute D_i.D_j using Lemma 3.2
for i in [1..#L] do for j in [i+1 .. #L] do  
	prod:=0;
// 3a. For each pair  (g1,g2) with g1 in L[i] and g2 in L[j]  we determine the intersection product
// \Gamma_{g1}.\Gamma_{g2} = the number of fix-points of the element g1^-1*g2 in G0, 
// and we increase prod accordingly (see equation (3.1)).
	for g1 in L[i] do for g2 in L[j] do 
		int:=CountingIntersections(g1^-1*g2,ram,G0);
		prod:=prod+int;
	end for; end for;
// 3b. D_i.D_ is then given by prod/|G|.
	printf "D_
end for; end for;

\end{code_magma}

\bibliographystyle{alpha}

\end{document}